\newtheorem{theorem}{Theorem}
\theoremstyle{plain}
\newtheorem{corollary}{Corollary}
\newtheorem{definition}{Definition}
\newtheorem{example}{Example}
\newtheorem{lemma}{Lemma}
\newtheorem{proposition}{Proposition}
\newtheorem{remark}{Remark}
\numberwithin{equation}{section}
\begin{document}
\title[NON-ISOTROPIC POTENTIAL THEORETIC INEQUALITY ]{NON-ISOTROPIC
POTENTIAL THEORETIC INEQUALITY }
\author{M. Esra YILDIRIM}
\address{[Department of Mathematics, Faculty of Science, University of
Cumhuriyet, 58140, Sivas, Turkey}
\email{mesra@cumhuriyet.edu.tr}
\author{Abdullah AKKURT}
\address{[Department of Mathematics, Faculty of Science and Arts, University
of Kahramanmara\c{s} S\"{u}t\c{c}\"{u} \.{I}mam, 46000, Kahramanmara\c{s},
Turkey}
\email{abdullahmat@gmail.com}
\author{H\"{u}seyin YILDIRIM}
\address{[Department of Mathematics, Faculty of Science and Arts, University
of Kahramanmara\c{s} S\"{u}t\c{c}\"{u} \.{I}mam, 46000, Kahramanmara\c{s},
Turkey}
\email{hyildir@ksu.edu.tr}
\date{}
\subjclass[2010]{31B10, 26A33, 35B45, 35B65, 46E30, 43A15, 47B37}
\keywords{Adams trace inequalitity, Stummel class, Morrey spaces,
non-isotropic distance}
\thanks{M.E. Yildirim was partially supported by the Scientific and
Technological Research Council of Turkey (TUBITAK Programme 2228-B)}

\begin{abstract}
In this paper, the new weighted inequalities were derived by $\beta $%
-distance which is similar to the given inequality for the potential
operator defined in \cite{AD71}.
\end{abstract}

\maketitle

\section{Introduction}

The following inequality has been obtained by D. Adams \cite{AD71};

Let $V$ is a non negative function in the Morrey space $L_{1,\lambda }\left( 
\mathbb{R}^{n}\right) ,\lambda >n-p$.

For $\forall u\in C_{0}^{\infty }\left( \mathbb{R}^{n}\right) ,\ q=p\frac{%
\lambda }{n-p},\ 1<p<n$, the following inequality is valid;%
\begin{equation}
\left( \int_{\mathbb{R}^{n}}|u\left( x\right) |^{q}V\left( x\right)
dx\right) ^{\frac{1}{q}}\leq C\left( p,\lambda ,n\right) \left\Vert
V\right\Vert _{L_{1,\lambda }\left( \mathbb{R}^{n}\right) }^{\frac{1}{q}%
}\left\Vert \nabla u\right\Vert _{L^{p}\left( \mathbb{R}^{n}\right) }
\label{0.1}
\end{equation}%
where $L_{1,\lambda }\left( {\mathbb{R}^{n}}\right) $ is\ Morrey space.

Morrey spaces $L_{p,\lambda }$ were introduced by Morrey in 1938 in
connection with certain problems in elliptic partial differential equations
and calculus of variations \cite{Morrey}. Later, Morrey spaces found
important applications to Navier Stokes and Schr\"{o}dinger equations,
elliptic problems with discontinuous coefficients and potential theory. An
exposition of the Morrey spaces can be found in the book \cite{Kuf}.

Morrey spaces were widely studied during last decades, including the study
of classical operators of harmonic analysis such as maximal, singular and
potential operators.

\begin{definition}
Let $1\leq p<\infty $, $0\leq \lambda \leq n$. We define the Morrey space $%
L_{p,\lambda }\left( {\mathbb{R}^{n}}\right) $ as the set of locally
integrable functions $f$ with the finite norms%
\begin{equation}
\Vert f\Vert _{L_{p,\lambda }}:=\sup_{x\in {\mathbb{R}^{n}},t>0}t^{-\frac{%
\lambda }{p}}\Vert f\Vert _{L_{p}(B(x,t))},
\end{equation}
\end{definition}

Note that if $p=1$, $L_{1,\lambda }\left( {\mathbb{R}^{n}}\right) $ Morrey
space is defined as follows;%
\begin{equation}
\begin{array}{ll}
L_{1,\lambda }\left( \mathbb{R}^{n}\right) & =\left\{ f\in L_{1}^{loc}\left( 
\mathbb{R}^{n}\right) :\left\Vert f\right\Vert _{L_{1,\lambda }\left( 
\mathbb{R}^{n}\right) }\right. \\ 
& \left. \equiv \sup_{x\in \mathbb{R}^{n},r>0}\frac{1}{r^{\lambda }}%
\int_{\left\vert x-y\right\vert <r}\left\vert f\left( y\right) \right\vert
dy<+\infty \right\} ,\ 0<\lambda <n.%
\end{array}
\label{0.2}
\end{equation}

According to the definition of $L_{p,\lambda }$, the parameter $p$ describes
the local integrability, while $\lambda $ describes measure the global
integrability. Unlike $L_{p,\lambda }$ with $p>1$, it is not the case that
we can characterize $L_{1,\lambda }$ in terms of the Littlewood-Paley
decomposition. For this reason, the singular integral operators like the
Riesz transforms are not bounded on $L_{1,\lambda }$. Nevertheless , this
space can be compared with other function spaces. This is what we do in the
present paper.

This paper aims at using $\beta $-distance establish an imbedding similar to
(\ref{0.1}), assuming more general hypotheses on the function $V$.

Firstly, we define a non isotropic distance or $\beta $-distance in $n$
dimensional Euclidean space $%
%TCIMACRO{\U{211d} }%
%BeginExpansion
\mathbb{R}
%EndExpansion
^{n}$.

It is well known that the families of integral operators with positive
kernels have many applications in different problems, in the theory of
differantial equation, harmonic analysis etc. Integral operators depending
on difference between the variables have princibal aplications. For
multidimensional case, this type of kernels are function of euclidean
distance between two points.

Let $\beta =\left( \beta _{1},\beta _{2},\ldots ,\ \beta _{n}\right) ,\beta
_{k}\geq \frac{1}{2},$ $k=1,2,\ldots ,n$ and $\left \vert \beta \right \vert
=\beta _{1}+\beta _{2}+\ldots +\beta _{n}$. For $x=\left( x_{1},\ldots
,x_{n}\right) $ and $y=\left( y_{1},\ldots ,y_{n}\right) $%
\begin{equation*}
\left \vert x-y\right \vert _{\beta }:=(\left \vert x_{1}-y_{1}\right \vert
^{\frac{1}{\beta _{1}}}+\left \vert x_{2}-y_{2}\right \vert ^{\frac{1}{\beta
_{2}}}+\ldots +\left \vert x_{n}-y_{n}\right \vert ^{\frac{1}{\beta _{n}}})^{%
\frac{\left \vert \beta \right \vert }{n}},
\end{equation*}%
is the non-isotropic distance or $\beta $-distance $x$ and $y$, given in 
\cite{BeLi60}, (\cite{SY06} $-$ \cite{OZ06})$,$ \cite{YI05}.

For any positive $t$, it is easy to see that this distance has the following
properties of homogeneity$,$ 
\begin{equation}
\left( \left\vert t^{\beta _{1}}x_{1}\right\vert ^{\frac{1}{\beta _{1}}%
}+\ldots +\left\vert t^{\beta _{n}}x_{n}\right\vert ^{\frac{1}{\beta _{n}}%
}\right) ^{\frac{\left\vert \beta \right\vert }{n}}=t^{\frac{\left\vert
\beta \right\vert }{n}}\left\vert x\right\vert _{\beta },\ t>0.  \label{0.3}
\end{equation}%
This equality give us that non-isotropic $\beta $-distance is the order of a
homogeneous function $\frac{\left\vert \beta \right\vert }{n}$. Thus the
non-isotropic $\beta $-distance has the following properties:%
\begin{equation*}
\begin{array}{l}
1.\left\vert x\right\vert _{\beta }=0\Leftrightarrow x=\theta ,\theta
=\left( 0,0,\ \ldots ,\ 0\right) . \\ 
\\ 
2.\left\vert t^{\beta }x\right\vert _{\beta }=\left\vert t\right\vert ^{%
\frac{\left\vert \beta \right\vert }{n}}\left\vert x\right\vert _{\beta }.
\\ 
\\ 
3.\left\vert x+y\right\vert _{\beta }\leq k(\left\vert x\right\vert _{\beta
}+\left\vert y\right\vert _{\beta }),%
\end{array}%
\end{equation*}

where $k=2^{\left( 1+\frac{1}{\beta _{\min }}\right) ^{\frac{\left \vert
\beta \right \vert }{n}}}$, $\beta _{\min }=\min \left \{ \beta _{1},\ \beta
_{2},\ \ldots ,\ \beta _{n}\right \} $.

Here we consider $\beta $-spherical coordinates by the following formulas:%
\begin{equation}
\begin{array}{l}
x_{1}=\left( \rho \cos \varphi _{1}\right) ^{2\beta _{1}}. \\ 
\\ 
x_{2}=\left( \rho \sin \varphi _{1}\cos \varphi _{2}\right) ^{2\beta _{2}}.
\\ 
\vdots \\ 
x_{n-1}=\left( \rho \sin \varphi _{1}\sin \varphi _{2}\ldots \sin \varphi
_{n-2}\cos \varphi _{n-1}\right) ^{2\beta _{n-1}} \\ 
\\ 
x_{n}=\left( \rho \sin \varphi _{1}\sin \varphi _{2}\ldots \sin \varphi
_{n-2}\sin \varphi _{n-1}\right) ^{2\beta _{n}}%
\end{array}
\label{0.4}
\end{equation}%
where $0\leq \varphi _{1},\varphi _{2},\ldots ,\varphi _{n-2}\leq \pi $ and $%
0\leq \varphi _{n-1}\leq 2\pi $.

By using $\beta $-spherical coordinates, we get that $\left\vert
x\right\vert _{\beta }=\rho ^{\frac{2\left\vert \beta \right\vert }{n}}$.

Firstly, we will define the $\beta -$ball $B_{\beta }\left( x,r\right) $
generated by the $\beta -$distance. For a positive $r$ and any $x\in 
%TCIMACRO{\U{211d} }%
%BeginExpansion
\mathbb{R}
%EndExpansion
^{n}$, the open $\beta $-ball with radius $r$ and a center $x$ as%
\begin{equation*}
B_{\beta }\left( x,r\right) =\left\{ \sigma :\left\vert x-y\right\vert
_{\beta }<r\right\} .
\end{equation*}

In \cite{RaCaZa01}, $S_{p}$ class has been obtained. Now, we introduce $%
S_{p} $ class depending on $\beta $-distance as follow. Let $1<p<n,$%
\begin{equation}
S_{p}^{\beta }=\left \{ f\in L_{1}^{loc}\left( \mathbb{R}^{n}\right)
:\sup_{x\in \mathbb{R}^{n}}\int_{\left \vert x-y\right \vert _{\beta }<r}%
\frac{\left \vert f\left( y\right) \right \vert }{\left \vert x-y\right
\vert _{\beta }^{\left( n-p\right) \frac{2\left \vert \beta \right \vert }{n}%
}}dy=\eta _{\beta }\left( r\right) \searrow 0\text{ }for\text{ }r\searrow
0\right \} .  \label{0.5}
\end{equation}

\section{PRELIMINARY RESULTS}

In this section, we introduce Morrey space $L_{1,\lambda }^{\beta }\left( 
\mathbb{R}^{n}\right) $ and $S_{p}^{\beta }$, we give some results relating
them. The Stummel class $S_{p}$ was introduced by Ragusa and Zamboni \cite%
{RaCaZa01}. This class is a class of functions related to local behavior of
mapping by generalized fractional integral operators and the generalized
Morrey spaces are classes of functions related to local behavior of
Hardy-Littlewood maximal function.Now, we introduce $S_{p}$ class depending
on $\beta $-distance as follows.

\begin{definition}
Let $1<p<n,$%
\begin{equation}
S_{p}^{\beta }=\left\{ f\in L_{1}^{loc}\left( \mathbb{R}^{n}\right)
:\sup_{x\in \mathbb{R}^{n}}\int_{\left\vert x-y\right\vert _{\beta }<r}\frac{%
\left\vert f\left( y\right) \right\vert }{\left\vert x-y\right\vert _{\beta
}^{\left( n-p\right) \frac{2\left\vert \beta \right\vert }{n}}}dy=\eta
_{\beta }\left( r\right) \searrow 0\text{ }for\text{ }r\searrow 0\right\} .
\label{0.5}
\end{equation}
\end{definition}

$L_{1,\lambda }^{\beta }\left( \mathbb{R}^{n}\right) $ Morrey space is
defined as follows.

\begin{definition}
Morrey space $L_{1,\lambda }^{\beta }\left( \mathbb{R}^{n}\right) $
generated by $\beta $-distance;%
\begin{equation*}
\begin{array}{ll}
L_{1,\lambda }^{\beta }\left( \mathbb{R}^{n}\right) & =\left \{ f\in
L_{1}^{loc}\left( \mathbb{R}^{n}\right) :\left \Vert f\right \Vert
_{L_{1,\lambda }^{\beta }\left( \mathbb{R}^{n}\right) }\right. \\ 
& \left. \equiv \sup_{x\in \mathbb{R}^{n},r>0}\frac{1}{r^{\frac{2\left \vert
\beta \right \vert }{n}\lambda }}\int_{\left \vert x-y\right \vert _{\beta
}<r}\left \vert f\left( y\right) \right \vert dy<+\infty \right \}
,0<\lambda <n%
\end{array}%
\end{equation*}%
where $\lambda >n-p$.
\end{definition}

The next lemma gives a relation between the space $S_{p}^{\beta }$ and $%
L_{1,\lambda }^{\beta }$.

\begin{lemma}
\textit{If} $V$ \textit{belongs to} $L_{1,\lambda }^{\beta }\left( \mathbb{R}%
^{n}\right) $, \textit{then} $V$ \textit{belongs to} $S_{p}^{\beta }$, 
\textit{and} 
\begin{equation*}
\int_{\left \vert x-y\right \vert _{\beta }<r}\frac{\left \vert V\left(
y\right) \right \vert }{\left \vert x-y\right \vert _{\beta }^{\left(
n-p\right) \frac{2\left \vert \beta \right \vert }{n}}}dy\leq C\left(
n,p,\lambda ,\beta \right) r^{\left( \lambda -\left( n-p\right) \right) 
\frac{2\left \vert \beta \right \vert }{n}}\left \Vert V\right \Vert
_{L_{1,\lambda }^{\beta }\left( \mathbb{R}^{n}\right) },
\end{equation*}%
where $\left( n-p\right) \frac{2\left \vert \beta \right \vert }{n}<\lambda 
\frac{2\left \vert \beta \right \vert }{n}<n\frac{2\left \vert \beta
\right
\vert }{n}.$
\end{lemma}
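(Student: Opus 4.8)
The plan is to estimate the singular integral by decomposing the ball $B_\beta(x,r)$ into dyadic annuli and summing the contributions, using the $\beta$-Morrey condition on each piece. Concretely, write
\[
B_\beta(x,r)=\bigcup_{j=0}^{\infty}\Bigl(B_\beta(x,2^{-j}r)\setminus B_\beta(x,2^{-j-1}r)\Bigr),
\]
so that on the $j$-th annulus one has $|x-y|_\beta\sim 2^{-j}r$, and hence the kernel is bounded below by a constant times $(2^{-j-1}r)^{(n-p)\frac{2|\beta|}{n}}$. This reduces the integral to
\[
\int_{|x-y|_\beta<r}\frac{|V(y)|}{|x-y|_\beta^{(n-p)\frac{2|\beta|}{n}}}\,dy
\;\le\;\sum_{j=0}^{\infty}\bigl(2^{-j-1}r\bigr)^{-(n-p)\frac{2|\beta|}{n}}\int_{|x-y|_\beta<2^{-j}r}|V(y)|\,dy.
\]

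Next I would invoke the definition of the $\beta$-Morrey norm directly: for every center $x$ and radius $\rho>0$,
\[
\int_{|x-y|_\beta<\rho}|V(y)|\,dy\;\le\;\rho^{\frac{2|\beta|}{n}\lambda}\,\|V\|_{L_{1,\lambda}^{\beta}(\mathbb{R}^n)}.
\]
Applying this with $\rho=2^{-j}r$ and substituting into the dyadic sum yields a bound of the form
\[
C\,\|V\|_{L_{1,\lambda}^{\beta}}\sum_{j=0}^{\infty}\bigl(2^{-j}r\bigr)^{\frac{2|\beta|}{n}\lambda}\bigl(2^{-j-1}r\bigr)^{-(n-p)\frac{2|\beta|}{n}}
= C'\,\|V\|_{L_{1,\lambda}^{\beta}}\,r^{(\lambda-(n-p))\frac{2|\beta|}{n}}\sum_{j=0}^{\infty}2^{-j\frac{2|\beta|}{n}(\lambda-(n-p))}.
\]
Since the hypothesis $\lambda>n-p$ forces the exponent $\frac{2|\beta|}{n}(\lambda-(n-p))>0$, the geometric series converges, giving precisely
\[
\int_{|x-y|_\beta<r}\frac{|V(y)|}{|x-y|_\beta^{(n-p)\frac{2|\beta|}{n}}}\,dy\;\le\;C(n,p,\lambda,\beta)\,r^{(\lambda-(n-p))\frac{2|\beta|}{n}}\,\|V\|_{L_{1,\lambda}^{\beta}(\mathbb{R}^n)}.
\]
Taking the supremum over $x$ and letting $r\searrow0$, the right-hand side tends to $0$ (again because the exponent of $r$ is positive), so $\eta_\beta(r)\searrow0$ and $V\in S_p^\beta$; this simultaneously establishes the displayed quantitative estimate.

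The main technical obstacle is making the dyadic estimate rigorous when the $\beta$-balls are not genuine metric balls — the quasi-triangle inequality (property 3, with constant $k=2^{(1+1/\beta_{\min})^{|\beta|/n}}$) means $B_\beta(x,\rho)$ need not be convex or even "round," so one must be careful that the nested annuli still cover $B_\beta(x,r)$ and that $|x-y|_\beta\ge 2^{-j-1}r$ genuinely holds off the inner ball (this last point is immediate from the definition of the annulus, so the only real care needed is the monotone covering, which also follows directly). A secondary point worth checking is that the Morrey bound $\int_{|x-y|_\beta<\rho}|V|\le \rho^{\frac{2|\beta|}{n}\lambda}\|V\|$ is exactly the definition once one keeps track of the factor $\frac{2|\beta|}{n}$ in the exponent — a routine bookkeeping matter rather than a genuine difficulty. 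No change of variables into $\beta$-spherical coordinates is actually needed for this lemma, though it would give an alternative route via the homogeneity relation $|x|_\beta=\rho^{2|\beta|/n}$.
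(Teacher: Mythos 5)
Your proposal is correct and follows essentially the same route as the paper: a dyadic decomposition of $B_{\beta }(x,r)$ into annuli $\{2^{-j-1}r\leq \left\vert x-y\right\vert _{\beta }<2^{-j}r\}$, bounding the kernel from above on each annulus, invoking the $L_{1,\lambda }^{\beta }$ norm on each ball $B_{\beta }(x,2^{-j}r)$, and summing the resulting geometric series, which converges precisely because $\lambda >n-p$. Your side remark is also right that no quasi-triangle inequality is needed here, since the annuli are level sets of $\left\vert x-y\right\vert _{\beta }$ and cover the ball automatically.
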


\textit{Conversely, if} $V$ \textit{belongs to} $S_{p}^{\beta }$ \textit{and}
$\eta _{\beta }\left( r\right) \sim r^{\alpha \frac{2\left \vert \beta
\right \vert }{n}}$ \textit{then} $V$ \textit{belongs to} $L_{1,\left(
n-p+\alpha \right) \frac{2\left \vert \beta \right \vert }{n}}^{\beta
}\left( \mathbb{R}^{n}\right) $.

\begin{proof}
About the first part, we have%
\begin{equation*}
\begin{array}{l}
\int_{\left \vert x-y\right \vert _{\beta }<r}\frac{\left \vert V\left(
y\right) \right \vert }{\left \vert x-y\right \vert _{\beta }^{\left(
n-p\right) \frac{2\left \vert \beta \right \vert }{n}}}dy \\ 
\\ 
=\sum \limits_{k=0}^{+\infty }\int \limits_{\frac{r}{2^{k+1}}\leq \left
\vert x-y\right \vert _{\beta }<\frac{r}{2^{k}}}\frac{\left \vert V\left(
y\right) \right \vert }{\left \vert x-y\right \vert _{\beta }^{\left(
n-p\right) \frac{2\left \vert \beta \right \vert }{n}}}dy \\ 
\\ 
\leq \sum \limits_{k=0}^{+\infty }\left( \frac{2^{k+1}}{r}\right) ^{\left(
n-p\right) \frac{2\left \vert \beta \right \vert }{n}}\int \limits_{\left
\vert x-y\right \vert _{\beta }<\frac{r}{2^{k}}}\left \vert V\left( y\right)
\right \vert dy \\ 
\\ 
\leq 2^{\left( n-p\right) \frac{2\left \vert \beta \right \vert }{n}}\sum
\limits_{k=0}^{+\infty }\left( \frac{2^{k}}{r}\right) ^{\left( n-p\right) 
\frac{2\left \vert \beta \right \vert }{n}}\left( \frac{r}{2^{k}}\right) ^{%
\frac{2\left \vert \beta \right \vert }{n}\lambda }\sup \limits_{r>0}\frac{1%
}{\left( \frac{r}{2^{k}}\right) ^{\frac{2\left \vert \beta \right \vert }{n}%
\lambda }}\int \limits_{\left \vert x-y\right \vert _{\beta }<\frac{r}{2^{k}}%
}\left \vert V\left( y\right) \right \vert dy \\ 
\\ 
\leq 2^{\left( n-p\right) \frac{2\left \vert \beta \right \vert }{n}%
}r^{\left( \lambda -\left( n-p\right) \right) \frac{2\left \vert \beta
\right \vert }{n}}\sum \limits_{k=0}^{+\infty }2^{k\frac{2\left \vert \beta
\right \vert }{n}\left( \left( n-p\right) -\lambda \right) }\left \Vert
V\right \Vert _{L_{1,\lambda }^{\beta }\left( \mathbb{R}^{n}\right) } \\ 
\\ 
=r^{\left( \lambda -\left( n-p\right) \right) \frac{2\left \vert \beta
\right \vert }{n}}C\left( n,p,\lambda ,\beta \right) \left \Vert V\right
\Vert _{L_{1,\lambda }^{\beta }\left( \mathbb{R}^{n}\right) }.%
\end{array}%
\end{equation*}%
The second part is obvious, indeed%
\begin{equation*}
\int_{\left \vert x-y\right \vert _{\beta }<r}\left \vert V\left( y\right)
\right \vert dy\leq r^{\left( n-p\right) \frac{2\left \vert \beta \right
\vert }{n}}\int_{\left \vert x-y\right \vert _{\beta }<r}\frac{\left \vert
V\left( y\right) \right \vert }{\left \vert x-y\right \vert _{\beta
}^{\left( n-p\right) \frac{2\left \vert \beta \right \vert }{n}}}dy\leq
Cr^{\left( n-p+\alpha \right) \frac{2\left \vert \beta \right \vert }{n}}.
\end{equation*}
\end{proof}

\begin{lemma}
\textit{Let} $V\in S_{p}^{\beta }$. \textit{Then th}ere \textit{exists} a 
\textit{positive constant} $C_{d}=C_{d}\left( n\right) $ \textit{such that} 
\begin{equation*}
\eta _{\beta }\left( r\right) \leq C_{d}\eta _{\beta }(\frac{r}{2})\ ,\ r>0.
\end{equation*}
\end{lemma}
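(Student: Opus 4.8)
The plan is to obtain the stated doubling estimate for the Stummel modulus $\eta _{\beta }$ from a \emph{uniform covering property} of the $\beta $-balls, combined with a splitting of the defining integral into a central ball of radius $r/2$ and the surrounding shell. Throughout, write $\gamma =(n-p)\frac{2\left\vert \beta \right\vert }{n}$, which is positive because $1<p<n$, and for $s>0$ let $\delta _{s}$ denote the non-isotropic dilation $\delta _{s}x=(s^{\beta _{1}}x_{1},\ldots ,s^{\beta _{n}}x_{n})$. Since $\delta _{s}$ is linear, the homogeneity property $(\ref{0.3})$ gives $|\delta _{s}x-\delta _{s}w|_{\beta }=|\delta _{s}(x-w)|_{\beta }=s^{|\beta |/n}|x-w|_{\beta }$, hence the scaling identity $\delta _{s}(B_{\beta }(w,\rho ))=B_{\beta }(\delta _{s}w,\,s^{|\beta |/n}\rho )$ for balls; in particular, with $s=r^{n/|\beta |}$ and $w=\theta $ one has $B_{\beta }(\theta ,r)=\delta _{r^{n/|\beta |}}(B_{\beta }(\theta ,1))$.

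\textbf{Step 1 (a cover of $B_{\beta }(x,r)$ by finitely many half-radius balls).} I will show there is an integer $N$, depending only on $n$ and the fixed vector $\beta $, such that for every $x\in \mathbb{R}^{n}$ and every $r>0$ the ball $B_{\beta }(x,r)$ is contained in a union of $N$ balls $B_{\beta }(z_{i},r/2)$, $i=1,\ldots ,N$. By translation invariance of $|\cdot |_{\beta }$ we may take $x=\theta $. The set $B_{\beta }(\theta ,1)$ is open and bounded (if $|x|_{\beta }<1$ then $|x_{j}|<1$ for each $j$), so its closure is compact and can be covered by finitely many balls $B_{\beta }(w_{1},1/2),\ldots ,B_{\beta }(w_{N},1/2)$ with $N=N(n,\beta )$. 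Applying $\delta _{r^{n/|\beta |}}$ to this cover and using the scaling identity (with $s^{|\beta |/n}=r$) yields $B_{\beta }(\theta ,r)\subseteq \bigcup_{i=1}^{N}B_{\beta }(z_{i},r/2)$ with $z_{i}=\delta _{r^{n/|\beta |}}w_{i}$; the essential point is that $N$ is independent of $r$ and of $x$.

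\textbf{Step 2 (the estimate and conclusion).} Fix $x$ and split the integral defining $\eta _{\beta }$ at radius $r/2$:
\begin{equation*}
\int_{|x-y|_{\beta }<r}\frac{|V(y)|}{|x-y|_{\beta }^{\gamma }}\,dy=\int_{|x-y|_{\beta }<r/2}\frac{|V(y)|}{|x-y|_{\beta }^{\gamma }}\,dy+\int_{r/2\leq |x-y|_{\beta }<r}\frac{|V(y)|}{|x-y|_{\beta }^{\gamma }}\,dy=:I_{1}+I_{2}.
\end{equation*}
By definition $I_{1}\leq \eta _{\beta }(r/2)$. On the shell $|x-y|_{\beta }\geq r/2$, so $I_{2}\leq (2/r)^{\gamma }\int_{B_{\beta }(x,r)}|V(y)|\,dy$. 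Covering $B_{\beta }(x,r)$ by the $N$ balls $B_{\beta }(z_{i},r/2)$ of Step 1 and using that $|z_{i}-y|_{\beta }<r/2$ on $B_{\beta }(z_{i},r/2)$,
\begin{equation*}
\int_{B_{\beta }(z_{i},r/2)}|V(y)|\,dy\leq (r/2)^{\gamma }\int_{B_{\beta }(z_{i},r/2)}\frac{|V(y)|}{|z_{i}-y|_{\beta }^{\gamma }}\,dy\leq (r/2)^{\gamma }\,\eta _{\beta }(r/2),
\end{equation*}
whence $I_{2}\leq N(2/r)^{\gamma }(r/2)^{\gamma }\eta _{\beta }(r/2)=N\,\eta _{\beta }(r/2)$. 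Adding the two bounds gives $\int_{|x-y|_{\beta }<r}|V(y)|\,|x-y|_{\beta }^{-\gamma }\,dy\leq (1+N)\,\eta _{\beta }(r/2)$ for every $x$, and taking the supremum over $x\in \mathbb{R}^{n}$ yields $\eta _{\beta }(r)\leq C_{d}\,\eta _{\beta }(r/2)$ with $C_{d}=1+N$ (a constant depending on $n$ and the fixed $\beta $, so $C_{d}=C_{d}(n)$ in the notation of the statement).

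\textbf{Main obstacle.} Everything apart from Step 1 is routine bookkeeping with the homogeneity of $|\cdot |_{\beta }$; the single genuine issue is that the covering number $N$ in Step 1 must be uniform in $r$ and in the centre, and this is exactly what translation invariance together with the scaling identity $\delta _{s}(B_{\beta }(w,\rho ))=B_{\beta }(\delta _{s}w,s^{|\beta |/n}\rho )$ provide, reducing the matter to one compactness statement about $B_{\beta }(\theta ,1)$. An alternative to the compactness argument is a Vitali-type packing argument based on the fact that $|B_{\beta }(x,r)|$ equals a fixed constant times $r^{n}$ (again a consequence of the dilation structure, the Jacobian of $\delta _{s}$ being $s^{|\beta |}$), with the quasi-triangle inequality, property $3$, used to pass from a maximal disjoint family of small balls to a cover.
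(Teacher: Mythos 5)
Your proof is correct, and it rests on the same key fact as the paper's: a cover of $B_{\beta }(x,r)$ by a bounded number of $\beta $-balls of radius $r/2$, the number being uniform in $x$ and $r$. The difference is in how the singular kernel is handled afterwards. The paper keeps the kernel $|x_{0}-y|_{\beta }^{-\gamma }$ throughout, writes the integral over $B_{\beta }(x_{0},r)$ as a sum over the covering balls, and splits each piece $I_{j}$ according to whether $|x_{0}-y|_{\beta }\geq |x_{j}-y|_{\beta }$ or not, bounding each half by $\eta _{\beta }(r/2)$ (so its constant is $2m$). You instead peel off the inner ball $B_{\beta }(x,r/2)$, where the integral is $\leq \eta _{\beta }(r/2)$ by definition, and on the shell $r/2\leq |x-y|_{\beta }<r$ you bound the kernel pointwise by $(2/r)^{\gamma }$, reducing matters to $\int_{B_{\beta }(x,r)}|V|$, which you then estimate through the cover by reinserting the kernel at the centres $z_{i}$; this gives the constant $1+N$. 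Both routes are valid and of comparable length. What your write-up adds — and the paper omits — is an actual justification of the uniform covering number: the paper simply posits points $x_{1},\ldots ,x_{m(n)}$ with the covering property, whereas you derive it from compactness of $\overline{B_{\beta }(\theta ,1)}$ together with the dilation identity $\delta _{s}(B_{\beta }(w,\rho ))=B_{\beta }(\delta _{s}w,s^{|\beta |/n}\rho )$ and translation invariance, which is exactly the point that needs checking in the non-isotropic setting (and which shows the constant really depends on $\beta $ as well as $n$, a dependence the paper's notation $C_{d}(n)$ suppresses).
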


\begin{proof}
Let $m=m\left( n\right) \in \mathbb{N},$ $x_{1},\ldots ,x_{m\left( n\right)
}\in B_{\beta }\left( x_{0},r\right) $ such that 
\begin{equation*}
B_{\beta }\left( x_{0},r\right) \subseteq \bigcup_{j=1}^{m}B_{\beta }\left(
x_{j},\frac{r}{2}\right) .
\end{equation*}%
We have%
\begin{equation*}
\int_{\left\vert x_{0}-y\right\vert _{\beta }<r}\frac{\left\vert V\left(
y\right) \right\vert }{\left\vert x_{0}-y\right\vert _{\beta }^{\left(
n-p\right) \frac{2\left\vert \beta \right\vert }{n}}}dy\leq
\sum_{j=1}^{m}\int_{\left\vert x_{j}-y\right\vert _{\beta }<\frac{r}{2}}%
\frac{\left\vert V\left( y\right) \right\vert }{\left\vert
x_{0}-y\right\vert _{\beta }^{\left( n-p\right) \frac{2\left\vert \beta
\right\vert }{n}}}dy=\sum_{j=1}^{m}I_{j}
\end{equation*}%
and%
\begin{equation*}
\begin{array}{ll}
I_{j} & =\int_{\left\vert x_{0}-y\right\vert _{\beta }\geq \left\vert
x_{j}-y\right\vert _{\beta },\left\vert x_{j}-y\right\vert _{\beta }<\frac{r%
}{2}}\frac{\left\vert V\left( y\right) \right\vert }{\left\vert
x_{0}-y\right\vert _{\beta }^{\left( n-p\right) \frac{2\left\vert \beta
\right\vert }{n}}}dy \\ 
&  \\ 
& +\int_{\left\vert x_{0}-y\right\vert _{\beta }<\left\vert
x_{j}-y\right\vert _{\beta }<\frac{r}{2}}\frac{\left\vert V\left( y\right)
\right\vert }{\left\vert x_{0}-y\right\vert _{\beta }^{\left( n-p\right) 
\frac{2\left\vert \beta \right\vert }{n}}}dy \\ 
&  \\ 
& =A_{j}+B_{j}.%
\end{array}%
\end{equation*}%
Since 
\begin{equation*}
A_{j}\leq \int_{\left\vert x_{j}-y\right\vert _{\beta }<\frac{r}{2}}\frac{%
\left\vert V\left( y\right) \right\vert }{\left\vert x_{j}-y\right\vert
_{\beta }^{\left( n-p\right) \frac{2\left\vert \beta \right\vert }{n}}}%
dy\leq \eta _{\beta }(\frac{r}{2})
\end{equation*}%
\begin{equation*}
B_{j}\leq \int_{\left\vert x_{0}-y\right\vert _{\beta }<\frac{r}{2}}\frac{%
\left\vert V\left( y\right) \right\vert }{\left\vert x_{0}-y\right\vert
_{\beta }^{\left( n-p\right) \frac{2\left\vert \beta \right\vert }{n}}}%
dy\leq \eta _{\beta }(\frac{r}{2})
\end{equation*}%
then, we get the conclusion.
\end{proof}

The following definition gives a generalization of $S_{p}^{\beta }$.

\begin{definition}
Let $\varphi :\left] 0,+\infty \right[ \rightarrow \left] 0,+\infty \right[ $
be a non-decreasing continuous function with $\lim\limits_{t\rightarrow
0}\varphi \left( t\right) =0$. We say that $V:\mathbb{R}^{n}\rightarrow 
\mathbb{R}$ belongs to the class $S_{p,\varphi }^{\beta }$ if and only if
there exists a non decreasing function $\xi _{\beta }:\left] 0,+\infty %
\right[ \rightarrow \left] 0,+\infty \right[ $ with $\lim\limits_{r%
\rightarrow 0}\xi _{\beta }\left( r\right) =0$ such that%
\begin{equation}
\sup_{x\in \mathbb{R}^{n}}\int_{\left\vert x-y\right\vert _{\beta }<r}\frac{%
\left\vert V\left( y\right) \right\vert }{\left\vert x-y\right\vert _{\beta
}^{\left( n-p\right) \frac{2\left\vert \beta \right\vert }{n}}\varphi \left(
\left\vert x-y\right\vert _{\beta }\right) }dy\leq \xi _{\beta }\left(
r\right) ,1<p<n.  \label{1.1}
\end{equation}
\end{definition}

In order to show that a function $V\in S_{p}^{\beta }$ belongs to an
appropriate $S_{p,\varphi }^{\beta }$ we give the following lemma.

\begin{lemma}
\textit{Let} $V\in S_{p}^{\beta }$ \textit{such that} $\exists \gamma \in %
\left] 0,1\right[ :$ $\int_{0}^{1}t^{-1}\eta _{\beta }^{1-\gamma }\left(
t\right) dt<+\infty $, \textit{where} $\eta _{\beta }\left( t\right) $ 
\textit{is the Stummel modulus generated by }$\beta $-distance \textit{of} $%
V $. \textit{Then} $V\in S_{p,\eta _{\beta }^{\gamma }}^{\beta }$ \textit{and%
}%
\begin{equation}
\int_{\left \vert x-y\right \vert _{\beta }<r}\frac{\left \vert V\left(
y\right) \right \vert }{\left \vert x-y\right \vert _{\beta }^{\left(
n-p\right) \frac{2\left \vert \beta \right \vert }{n}}\eta _{\beta }^{\gamma
}\left( \left \vert x-y\right \vert _{\beta }\right) }dy\leq \mu _{\beta
}\left( r\right) ,  \label{1.2}
\end{equation}%
\textit{wh}ere 
\begin{equation*}
\mu _{\beta }\left( r\right) =\frac{2}{C}\int_{0}^{r}t^{-1}\eta _{\beta
}^{1-\gamma }\left( t\right) dt.
\end{equation*}

\begin{proof}
Using Lemma 2, we can obtain%
\begin{equation*}
\begin{array}{ll}
\int_{\left \vert x-y\right \vert _{\beta }<r}\frac{\left \vert V\left(
y\right) \right \vert }{\left \vert x-y\right \vert _{\beta }^{\left(
n-p\right) \frac{2\left \vert \beta \right \vert }{n}}\eta _{\beta }^{\gamma
}\left( \left \vert x-y\right \vert _{\beta }\right) }dy & =\sum
\limits_{k=0}^{+\infty }\int_{\frac{r}{2^{k+1}}\leq \left \vert x-y\right
\vert _{\beta }<\frac{r}{2^{k}}}\frac{\left \vert V\left( y\right) \right
\vert }{\left \vert x-y\right \vert _{\beta }^{\left( n-p\right) \frac{%
2\left \vert \beta \right \vert }{n}}\eta _{\beta }^{\gamma }\left( \left
\vert x-y\right \vert _{\beta }\right) }dy \\ 
&  \\ 
& \leq \sum \limits_{k=0}^{+\infty }\eta _{\beta }\left( \frac{r}{2^{k}}%
\right) \left[ \eta _{\beta }\left( \frac{r}{2^{k+1}}\right) \right]
^{-\gamma } \\ 
&  \\ 
& \leq C^{-\gamma }\sum \limits_{k=0}^{+\infty }\left[ \eta _{\beta }\left( 
\frac{r}{2^{k}}\right) \right] ^{1-\gamma }.%
\end{array}%
\end{equation*}%
The last series converges observing that%
\begin{equation*}
\begin{array}{ll}
\int_{0}^{r}t^{-1}\eta _{\beta }^{1-\gamma }\left( t\right) dt & =\sum
\limits_{k=0}^{+\infty }\int_{\frac{r}{2^{k+1}}}^{\frac{r}{2^{k}}}t^{-1}\eta
_{\beta }^{1-\gamma }\left( t\right) dt \\ 
&  \\ 
& \geq \sum \limits_{k=0}^{+\infty }\left[ \eta _{\beta }\left( \frac{r}{%
2^{k+1}}\right) \right] ^{1-\gamma }\frac{2^{k}}{r}\int \limits_{\frac{r}{%
2^{k+1}}}^{\frac{r}{2^{k}}}dt \\ 
&  \\ 
& \geq \frac{1}{2}C^{1-\gamma }\sum \limits_{k=0}^{+\infty }\left[ \eta
_{\beta }\left( \frac{r}{2^{k}}\right) \right] ^{1-\gamma }.%
\end{array}%
\end{equation*}
\end{proof}
\end{lemma}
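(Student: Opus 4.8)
The plan is to run the same dyadic decomposition of the $\beta$-ball that was used in the proofs of Lemma~1 and Lemma~2, exploiting the monotonicity of the Stummel modulus $\eta_\beta$ to absorb the extra denominator $\eta_\beta^{\gamma}(|x-y|_\beta)$ one annulus at a time, and then to compare the resulting series with the integral $\int_0^r t^{-1}\eta_\beta^{1-\gamma}(t)\,dt$, whose convergence near the origin is precisely the standing hypothesis.

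First I would fix $x\in\mathbb{R}^n$ and $r>0$ and write
\[
B_\beta(x,r)=\bigcup_{k=0}^{\infty}\Bigl\{\,y:\ \tfrac{r}{2^{k+1}}\le|x-y|_\beta<\tfrac{r}{2^{k}}\,\Bigr\}.
\]
On the $k$-th annulus one has $|x-y|_\beta\ge r/2^{k+1}$, and since $\eta_\beta$ is non-decreasing this gives $\eta_\beta^{-\gamma}(|x-y|_\beta)\le\eta_\beta^{-\gamma}(r/2^{k+1})$. Factoring that constant out and enlarging the region of integration to the whole ball $|x-y|_\beta<r/2^{k}$, the definition of $\eta_\beta$ yields
\[
\int_{\frac{r}{2^{k+1}}\le|x-y|_\beta<\frac{r}{2^{k}}}\frac{|V(y)|\,dy}{|x-y|_\beta^{(n-p)\frac{2|\beta|}{n}}\,\eta_\beta^{\gamma}(|x-y|_\beta)}\le\eta_\beta^{-\gamma}\!\Bigl(\tfrac{r}{2^{k+1}}\Bigr)\,\eta_\beta\!\Bigl(\tfrac{r}{2^{k}}\Bigr).
\]
Summing over $k\ge0$ and invoking Lemma~2 in the form $\eta_\beta(r/2^{k+1})\ge C\,\eta_\beta(r/2^{k})$ (with $C$ the reciprocal of the doubling constant $C_d(n)$), the left side of \eqref{1.2} is bounded by $C^{-\gamma}\sum_{k=0}^{\infty}\bigl[\eta_\beta(r/2^{k})\bigr]^{1-\gamma}$.

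It then remains to dominate this series by $\int_0^r t^{-1}\eta_\beta^{1-\gamma}(t)\,dt$. Since $1-\gamma>0$, the map $t\mapsto\eta_\beta^{1-\gamma}(t)$ is non-decreasing, so on $[r/2^{k+1},r/2^{k}]$ we have $\eta_\beta^{1-\gamma}(t)\ge\eta_\beta^{1-\gamma}(r/2^{k+1})$ and $t^{-1}\ge2^{k}/r$; hence $\int_{r/2^{k+1}}^{r/2^{k}}t^{-1}\eta_\beta^{1-\gamma}(t)\,dt\ge\tfrac12\,\eta_\beta^{1-\gamma}(r/2^{k+1})\ge\tfrac12\,C^{1-\gamma}\,\eta_\beta^{1-\gamma}(r/2^{k})$, using Lemma~2 once more. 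Adding these inequalities over $k$ gives $\int_0^r t^{-1}\eta_\beta^{1-\gamma}(t)\,dt\ge\tfrac12\,C^{1-\gamma}\sum_{k\ge0}\bigl[\eta_\beta(r/2^{k})\bigr]^{1-\gamma}$, and combining with the previous bound converts the estimate on \eqref{1.2} into $\mu_\beta(r)=\tfrac{2}{C}\int_0^r t^{-1}\eta_\beta^{1-\gamma}(t)\,dt$, as claimed. Finally, the hypothesis $\int_0^1 t^{-1}\eta_\beta^{1-\gamma}(t)\,dt<+\infty$ makes $\mu_\beta$ finite and non-decreasing with $\mu_\beta(r)\to0$ as $r\to0$; since $\eta_\beta^{\gamma}$ is itself non-decreasing with limit $0$ at the origin, this is exactly the assertion $V\in S_{p,\eta_\beta^{\gamma}}^{\beta}$.

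I do not expect a genuine analytic obstacle: the argument is entirely the dyadic-sum versus integral-comparison scheme already present in the preceding lemmas. The only points needing care are (i) applying Lemma~2 in the correct direction — it is the value of the modulus at the \emph{smaller} radius $r/2^{k+1}$ that controls the tail from below — and (ii) noticing that the crude length estimate $\int_{r/2^{k+1}}^{r/2^{k}}dt=r/2^{k+1}$ is already enough, so no sharper Riemann-sum bound is required.
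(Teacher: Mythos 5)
Your proposal is correct and follows essentially the same route as the paper's own proof: the dyadic annular decomposition of $B_\beta(x,r)$, the use of the monotonicity of $\eta_\beta$ together with the doubling estimate of Lemma~2 to reduce to the series $\sum_k[\eta_\beta(r/2^k)]^{1-\gamma}$, and the same integral-versus-sum comparison to produce the bound $\mu_\beta(r)=\frac{2}{C}\int_0^r t^{-1}\eta_\beta^{1-\gamma}(t)\,dt$. The only difference is cosmetic: you spell out the direction in which Lemma~2 is applied and the final verification that $\mu_\beta(r)\to 0$, which the paper leaves implicit.
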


\section{MAIN RESULTS}

In this section ,under the more general hypotheses for function $V$, we will
obtain embeddings like (\ref{0.1}) using $\beta $-distance.

Firstly we need the following definitions:

Let $f$ and $h$ be measurable functions such that $f\in L_{1}^{loc}\left( 
\mathbb{R}^{n}\right) $ and $h\geq 0$, we set the fractional integral
generated by $\beta $-distance of order $p$ as%
\begin{equation}
I_{p}^{\beta }\left( f\right) \left( x\right) =\int_{\mathbb{R}^{n}}\frac{%
\left\vert f\left( y\right) \right\vert }{\left\vert x-y\right\vert _{\beta
}^{\left( n-p\right) \frac{2\left\vert \beta \right\vert }{n}}}dy,  \label{2}
\end{equation}%
and we get generalized fractional integral generated by $\beta $-distance;%
\begin{equation}
I_{p,h}^{\beta }\left( f\right) \left( x\right) =\int_{\mathbb{R}^{n}}\frac{%
\left\vert f\left( y\right) \right\vert }{\left\vert x-y\right\vert _{\beta
}^{\left( n-p\right) \frac{2\left\vert \beta \right\vert }{n}}h\left(
\left\vert x-y\right\vert _{\beta }\right) }dy.  \label{2.0}
\end{equation}%
The important properties of the fractional integrals, their generalizations
were studied by many authors. We refer to papers \cite{LE64}$-$\cite{OZ06},\ 
\cite{YI05}.

\begin{theorem}
\textit{Let} $V\in S_{p,\varphi }^{\beta }$ \textit{with} $\varphi \left(
t\right) $ \textit{and} $\xi _{\beta }\left( t\right) $ as \textit{in
Definition} 2. \textit{Then, for any} $\sigma \in \left] 0,1\right[ $, 
\textit{there exists a non-decreasing, positive function} $G\left( t\right) $
\textit{such that}%
\begin{equation}
\int_{B_{\beta }\left( y,r\right) }G\left( \frac{I_{p,\varphi ^{\sigma
}}^{\beta }\left( f^{p}\right) }{\left\Vert f\right\Vert _{p}^{p}}\right)
V\left( x\right) dx\leq \xi _{\beta }\left( r\right)   \label{2.1}
\end{equation}%
\textit{for all} $f\in C_{0}^{\infty }\left( \mathbb{R}^{n}\right) $ ,%
\textit{where} $B_{\beta }\left( .,r\right) $ \textit{is } $\beta -$ball 
\textit{with radius} $r$ \textit{containing the support of} $f.$ \textit{Also%
}%
\begin{equation}
\lim_{t\rightarrow \infty }\frac{G\left( t\right) }{t}=+\infty .  \label{2.2}
\end{equation}
\end{theorem}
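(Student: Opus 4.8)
The plan is to pass to the distribution function of the modified potential with respect to the measure $V\,dx$, to prove a weak-type bound for its super-level sets, and then to build $G$ by integrating an explicit, slowly diverging density. Since the statement is invariant under $f\mapsto cf$, one may assume $\|f\|_p=1$; write $w(x):=I_{p,\varphi^\sigma}^\beta(f^p)(x)$, $B:=B_\beta(y,r)\supseteq\operatorname{supp}f$, and $\nu(E):=\int_E V(x)\,dx$. If $G$ is non-decreasing with $G(0)=0$, then by Tonelli
\[
\int_B G(w(x))\,V(x)\,dx=\int_0^\infty G'(s)\,\nu(E_s)\,ds,\qquad E_s:=\{x\in B:\ w(x)>s\},
\]
so the proof splits into (i) a good bound for $\nu(E_s)$ and (ii) a choice of $G$.

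For (i), put $\psi(t):=t^{(n-p)\frac{2|\beta|}{n}}\varphi(t)^\sigma$; since $1<p<n$ and $\varphi$ is continuous, non-decreasing, with $\varphi(0^+)=0$, the function $\psi$ is continuous, strictly increasing and $\psi(0^+)=0$, so $\psi^{-1}$ is available on the relevant range. Fix $s$ and set $\rho_s:=\psi^{-1}(2/s)$. For $x\in E_s$ split the integral defining $w(x)$ at $|x-y'|_\beta=\rho_s$: by the monotonicity of $t\mapsto t^{(n-p)\frac{2|\beta|}{n}}\varphi(t)^\sigma$ the part over $\{|x-y'|_\beta\ge\rho_s\}$ is at most $\|f\|_p^p/\psi(\rho_s)=s/2$, hence the part over $\{|x-y'|_\beta<\rho_s\}$ exceeds $s/2$. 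Then Chebyshev's inequality, Tonelli, the monotonicity of $\varphi^{1-\sigma}$, and the defining inequality of $S_{p,\varphi}^\beta$ (with center $y'$) give
\[
\nu(E_s)\ \le\ \frac{2}{s}\,\varphi(\rho_s)^{1-\sigma}\,\xi_\beta(\rho_s).
\]
Inserting $|x-y|_\beta^{(n-p)\frac{2|\beta|}{n}}\varphi(|x-y|_\beta)$ into numerator and denominator also gives the crude bound $\nu(E_s)\le\nu(B)\le r^{(n-p)\frac{2|\beta|}{n}}\varphi(r)\,\xi_\beta(r)$. By property 3 of the $\beta$-distance, $|x-y'|_\beta\le C_1 r$ for $x\in B$, $y'\in\operatorname{supp}f$ with $C_1$ depending only on $\beta$, so the two bounds cross (up to constants) at $s_0:=2/\psi(C_1 r)$: the binding bound is $\nu(B)$ for $s\le s_0$ and $\tfrac{2}{s}\varphi(\rho_s)^{1-\sigma}\xi_\beta(\rho_s)$ for $s\ge s_0$.

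For (ii), take $G(t)=\int_0^t g(s)\,ds$ with $g:(0,\infty)\to[1,\infty)$ non-decreasing, $g(s)\to\infty$ as $s\to\infty$, and $g(2/\psi(t))$ comparable to $\varphi(t)^{-(1-\sigma)}$ as $t\to 0$ but suitably damped (see below). Then $G$ is non-decreasing, positive on $(0,\infty)$, $G(0)=0$, and $\lim_{t\to\infty}G(t)/t=\lim_{s\to\infty}g(s)=+\infty$, which is \eqref{2.2}. Inserting the two level-set bounds into the Tonelli identity, splitting the $s$-integral at $s_0$, and substituting $s\mapsto t=\rho_s=\psi^{-1}(2/s)$ (so that $ds/s=-d\ln\psi(t)$, with $s_0\leftrightarrow C_1 r$ and $\infty\leftrightarrow 0^+$) reduces the claim to
\[
\int_B G(w)\,V\,dx\ \le\ \nu(B)\,G(s_0)\ +\ 2\int_0^{C_1 r} g\!\left(\tfrac{2}{\psi(t)}\right)\varphi(t)^{1-\sigma}\,\xi_\beta(t)\,d\ln\psi(t);
\]
the first term is $\le\xi_\beta(r)$ up to a constant by $\nu(B)\le r^{(n-p)\frac{2|\beta|}{n}}\varphi(r)\xi_\beta(r)$ and the definition of $G$, and the integral is controlled using the monotonicity of $\xi_\beta$.

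The delicate step is the choice of $g$. The conclusion $G(t)/t\to\infty$ forces $g(s)\to\infty$, yet the naive choice $g(2/\psi(t))=\varphi(t)^{-(1-\sigma)}$ turns the last integral into $2\int_0^{C_1 r}\xi_\beta(t)\,d\ln\psi(t)$, whose contribution $\sigma\int_0^{C_1 r}\xi_\beta(t)\,d\ln\varphi(t)$ diverges because $\ln\varphi(t)\to-\infty$. Thus $g$ must be made to diverge strictly more slowly, and the damping must be fixed once and for all, independently of $r$, so that $G$ does not depend on $f$; quantifying this forces an implicit Dini-type compatibility between $\varphi$ and $\xi_\beta$, in the spirit of the hypothesis $\int_0^1 t^{-1}\eta_\beta^{1-\gamma}(t)\,dt<+\infty$ of Lemma~3. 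Exhibiting one admissible damping valid for every relevant $r$, and sharpening the constants so the bound is exactly $\xi_\beta(r)$ rather than $C\,\xi_\beta(C r)$, is where the real effort lies; the remaining ingredients — the scaling reduction, the near/far splitting of the potential, the Fubini exchange, and the appeal to the $S_{p,\varphi}^\beta$ inequality — are routine.
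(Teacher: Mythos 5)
Your argument is not complete, and the gap sits at the heart of the theorem: the function $G$ is never actually constructed. You acknowledge this yourself (``where the real effort lies''), but the situation is worse than a deferred computation --- the weak-type/layer-cake framework you set up cannot be closed under the stated hypotheses. After your substitution $s=2/\psi(t)$, the surviving term is essentially $\int_{0}^{C_{1}r}g\bigl(2/\psi(t)\bigr)\varphi(t)^{1-\sigma}\xi _{\beta }(t)\,\frac{dt}{t}$, coming from the $(n-p)\frac{2\left\vert \beta \right\vert }{n}\frac{dt}{t}$ part of $d\ln \psi (t)$. Conclusion \eqref{2.2} forces $g$ to be bounded below near infinity, so you need at minimum a Dini-type condition $\int_{0}^{1}\varphi (t)^{1-\sigma }\xi _{\beta }(t)\,t^{-1}dt<+\infty $. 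Definition 2 grants only that $\varphi $ and $\xi _{\beta }$ are non-decreasing and vanish at $0$; taking, say, $\varphi (t)=1/\log (e/t)$ and $\xi _{\beta }(t)=1/\log \log (e^{e}/t)$ makes that integral diverge for every $\sigma \in \left] 0,1\right[ $. So your route genuinely needs an extra hypothesis (of the kind Lemma 3 imposes on $\eta _{\beta }$) that Theorem 1 does not assume. The loss is introduced precisely where you pull $\varphi (\rho _{s})^{1-\sigma }$ and $\xi _{\beta }(\rho _{s})$ out of the Chebyshev estimate as constants and then must re-sum them over all levels $s$.

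The paper avoids the level-set decomposition entirely by a pointwise Hedberg-type argument: for each fixed $x$ one splits $I_{p,\varphi ^{\sigma }}^{\beta }(f^{p})(x)$ at $\left\vert x-y\right\vert _{\beta }=\varepsilon $, bounds the near part by $\varphi ^{1-\sigma }(\varepsilon )I_{p,\varphi }^{\beta }(f^{p})(x)$ and the far part by $\Vert f\Vert _{p}^{p}/\bigl(\varepsilon ^{(n-p)\frac{2\left\vert \beta \right\vert }{n}}\varphi ^{\sigma }(\varepsilon )\bigr)$, and then chooses $\varepsilon =\Phi ^{-1}\bigl(\Vert f\Vert _{p}^{p}/I_{p,\varphi }^{\beta }(f^{p})(x)\bigr)$ with $\Phi (\varepsilon )=\varepsilon ^{(n-p)\frac{2\left\vert \beta \right\vert }{n}}\varphi (\varepsilon )$ so that the two terms are equal. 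This yields the pointwise inequality $G\bigl(I_{p,\varphi ^{\sigma }}^{\beta }(f^{p})/\Vert f\Vert _{p}^{p}\bigr)\leq I_{p,\varphi }^{\beta }(f^{p})/\Vert f\Vert _{p}^{p}$ with $G=\psi ^{-1}$ for an explicit $\psi $; a single Fubini exchange then feeds the full-strength kernel $\left\vert x-y\right\vert _{\beta }^{-(n-p)\frac{2\left\vert \beta \right\vert }{n}}\varphi (\left\vert x-y\right\vert _{\beta })^{-1}$ directly into the defining inequality of $S_{p,\varphi }^{\beta }$, giving \eqref{2.1} with no summation over levels and hence no Dini condition, while \eqref{2.2} reduces to $\lim_{s\rightarrow 0}\varphi ^{1-\sigma }(s)=0$. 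In your language, the fix is that the splitting radius must be chosen $x$-dependently through $I_{p,\varphi }^{\beta }(f^{p})(x)$ rather than through the level $s$; only then does the $\varphi ^{1-\sigma }$ factor recombine with $\varphi ^{\sigma }$ instead of having to be integrated separately.
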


\begin{proof}
For $\epsilon >0$ and $0<\sigma <1$, we obtain,%
\begin{equation}
\begin{array}{ll}
I_{p,\varphi ^{\sigma }}^{\beta }\left( f^{p}\right) \left( x\right) & 
=\int_{\left \vert x-y\right \vert _{\beta }\leq \varepsilon }\frac{\left
\vert f\left( y\right) \right \vert ^{p}\varphi \left( \left \vert x-y\right
\vert _{\beta }\right) }{\left \vert x-y\right \vert _{\beta }^{\left(
n-p\right) \frac{2\left \vert \beta \right \vert }{n}}\varphi ^{\sigma
}\left( \left \vert x-y\right \vert _{\beta }\right) \varphi \left( \left
\vert x-y\right \vert _{\beta }\right) }dy \\ 
&  \\ 
& +\int_{\left \vert x-y\right \vert _{\beta }>\varepsilon }\frac{\left
\vert f\left( y\right) \right \vert ^{p}}{\left \vert x-y\right \vert
_{\beta }^{\left( n-p\right) \frac{2\left \vert \beta \right \vert }{n}%
}\varphi ^{\sigma }\left( \left \vert x-y\right \vert _{\beta }\right) }dy
\\ 
&  \\ 
& \leq \varphi ^{1-\sigma }\left( \varepsilon \right) I_{p,\varphi }^{\beta
}\left( f^{p}\right) +\frac{1}{\epsilon ^{\left( n-p\right) \frac{2\left
\vert \beta \right \vert }{n}}\varphi ^{\sigma }\left( \varepsilon \right) }%
\Vert f\Vert _{p}^{p}.%
\end{array}
\label{2.3}
\end{equation}%
Letting $\varepsilon ^{\left( n-p\right) \frac{2\left \vert \beta
\right
\vert }{n}}\varphi \left( \varepsilon \right) =\Phi \left(
\varepsilon \right) $ , we choose 
\begin{equation*}
\varepsilon =\Phi ^{-1}\left( \frac{||f||_{p}^{p}}{I_{p,\varphi }^{\beta
}\left( f^{p}\right) }\right) \ ,
\end{equation*}%
a choice which makes the two terms on the right hand side of (\ref{2.3})
equal.

From (\ref{2.3}), we obtain 
\begin{equation*}
\frac{I_{p,\varphi ^{\sigma }}^{\beta }f^{p}\left( x\right) }{||f||_{p}^{p}}%
\leq \frac{2}{\left[ \Phi ^{-1}\left( \frac{||f\Vert _{p}^{p}}{I_{p,\varphi
}^{\beta }\left( f^{p}\right) }\right) \right] ^{\left( n-p\right) \frac{%
2\left \vert \beta \right \vert }{n}}\varphi ^{\sigma }\left[ \Phi
^{-1}\left( \frac{||f\Vert _{p}^{p}}{I_{p,\varphi }^{\beta }\left(
f^{p}\right) }\right) \right] }.
\end{equation*}%
If 
\begin{equation*}
\psi \left( t\right) =\frac{2}{\left[ \Phi ^{-1}(\frac{1}{t})\right]
^{\left( n-p\right) \frac{2\left \vert \beta \right \vert }{n}}\varphi
^{\sigma }\left[ \Phi ^{-1}\left( \frac{1}{t}\right) \right] }
\end{equation*}%
and 
\begin{equation*}
G\left( t\right) =\psi ^{-1}\left( t\right) ,
\end{equation*}%
we have%
\begin{equation*}
G\left( \frac{I_{p,\varphi ^{\sigma }}^{\beta }\left( f^{p}\right) }{%
||f||_{p}^{p}}\right) \leq \frac{I_{p,\varphi }^{\beta }\left( f^{p}\right) 
}{||f||_{p}^{p}}.
\end{equation*}%
Finally, using Fubini's theorem 
\begin{equation*}
\begin{array}{l}
\int_{B_{\beta }\left( y,r\right) }G\left( \frac{I_{p,\varphi ^{\sigma
}}^{\beta }\left( f^{p}\right) \left( x\right) }{\Vert f\Vert _{p}^{p}}%
\right) |V\left( x\right) |dx \\ 
\\ 
\leq \frac{1}{\Vert f\Vert _{p}^{p}}\int_{B_{\beta }\left( y,r\right)
}I_{p,\varphi }^{\beta }\left( f^{p}\right) \left( x\right) |V\left(
x\right) |dx \\ 
\\ 
=\frac{1}{\Vert f\Vert _{p}^{p}}\int_{B_{\beta }\left( y,r\right) }\left(
\int_{\mathbb{R}^{n}}\frac{|f\left( y\right) |^{p}}{\left \vert x-y\right
\vert _{\beta }^{\left( n-p\right) \frac{2\left \vert \beta \right \vert }{n}%
}\varphi \left( \left \vert x-y\right \vert _{\beta }\right) }dy\right)
\left \vert V\left( x\right) \right \vert dx \\ 
\\ 
=\frac{1}{\Vert f\Vert _{p}^{p}}\int_{\mathbb{R}^{n}}\left( \int_{B_{\beta
}\left( y,r\right) }\frac{|V\left( x\right) |}{\left \vert x-y\right \vert
_{\beta }^{\left( n-p\right) \frac{2\left \vert \beta \right \vert }{n}%
}\varphi \left( \left \vert x-y\right \vert _{\beta }\right) }dx\right)
\left \vert f\left( y\right) \right \vert ^{p}dy\leq \xi _{\beta }\left(
r\right) .%
\end{array}%
\end{equation*}%
So (\ref{2.1}) was obtained.

(\ref{2.2}) is easily seen to be equivalent to%
\begin{equation}
\lim_{s\rightarrow 0}\frac{[\Phi ^{-1}\left( s\right) ]^{\left( n-p\right) 
\frac{2\left \vert \beta \right \vert }{n}}\varphi ^{\sigma }[\Phi
^{-1}\left( s\right) ]}{s}=+\infty .  \label{2.4}
\end{equation}%
Choosing $H\left( t\right) =t^{\left( n-p\right) \frac{2\left \vert \beta
\right \vert }{n}}\varphi ^{\sigma }\left( t\right) ,(2.4)$ can be rewritten
as%
\begin{equation}
\lim_{s\rightarrow 0}\frac{H\left( \Phi ^{-1}\left( s\right) \right) }{s}%
=+\infty .  \label{2.5}
\end{equation}%
Since $\lim_{s\rightarrow 0}\frac{\Phi \left( s\right) }{H\left( s\right) }%
=\lim_{s\rightarrow 0}\varphi ^{1-\sigma }\left( s\right) =0$ we obtain (\ref%
{2.5}).
\end{proof}

\begin{lemma}
\textit{Let} $h:\left] 0,+\infty \right[ \rightarrow \left] 0,+\infty \right[
$ \textit{such that} $\int_{0}^{1}\frac{[h\left( t\right) ]^{p^{\prime }/p}}{%
t}dt<+\infty $ $(p^{\prime }:\frac{1}{p^{\prime }}+\frac{1}{p}=1)$. \textit{%
Then} 
\begin{equation*}
I_{1}^{\beta }\left( f\right) \leq C\left( n,p,diam\left( sptf\right)
,h\right) \left[ I_{p,h}\left( f^{p}\right) \right] ^{\frac{1}{p}}
\end{equation*}%
\textit{for all} $f\in C_{0}^{\infty }\left( \mathbb{R}^{n}\right) $.
\end{lemma}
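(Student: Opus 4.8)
The plan is to prove the inequality \emph{pointwise in $x$} by a single application of H\"{o}lder's inequality with the conjugate exponents $p$ and $p^{\prime }$, arranged so that one of the two factors is $I_{p,h}^{\beta }\left( f^{p}\right) \left( x\right) $ (the operator of (\ref{2.0})) and the other is a ``universal'' integral of $h$ against a kernel that is exactly critical for the $\beta $-homogeneity; the integrability hypothesis $\int_{0}^{1}t^{-1}[h(t)]^{p^{\prime }/p}dt<+\infty $ is precisely what makes that second factor finite.

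First I would fix $x\in \mathbb{R}^{n}$ and $f\in C_{0}^{\infty }\left( \mathbb{R}^{n}\right) $ and note that the integral defining $I_{1}^{\beta }\left( f\right) \left( x\right) $ (see (\ref{2})) effectively runs over $\mathrm{spt}\,f$; by the quasi-triangle inequality for $\left\vert \cdot \right\vert _{\beta }$ (property $3$ of the Introduction, with constant $k$) there is an $R$ comparable to $\mathrm{diam}\left( \mathrm{spt}\,f\right) $ with $\mathrm{spt}\,f\subseteq B_{\beta }\left( x,R\right) $ for the relevant $x$ (e.g.\ $x$ ranging over a $\beta $-ball containing $\mathrm{spt}\,f$, the situation in which Theorem~1 applies the lemma). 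Writing $a=\frac{2\left\vert \beta \right\vert }{n}$ and splitting the kernel of $I_{1}^{\beta }$ as
\begin{equation*}
\frac{\left\vert f\left( y\right) \right\vert }{\left\vert x-y\right\vert _{\beta }^{\left( n-1\right) a}}=\left( \frac{\left\vert f\left( y\right) \right\vert ^{p}}{\left\vert x-y\right\vert _{\beta }^{\left( n-p\right) a}h\left( \left\vert x-y\right\vert _{\beta }\right) }\right) ^{1/p}\cdot \frac{[h\left( \left\vert x-y\right\vert _{\beta }\right) ]^{1/p}}{\left\vert x-y\right\vert _{\beta }^{\left( \left( n-1\right) -\frac{n-p}{p}\right) a}},
\end{equation*}
H\"{o}lder's inequality over $B_{\beta }\left( x,R\right) $ gives $I_{1}^{\beta }\left( f\right) \left( x\right) \leq \left[ I_{p,h}^{\beta }\left( f^{p}\right) \left( x\right) \right] ^{1/p}\,J\left( x\right) ^{1/p^{\prime }}$, the first factor being $\left[ I_{p,h}^{\beta }\left( f^{p}\right) \left( x\right) \right] ^{1/p}$ because $\mathrm{spt}\,f\subseteq B_{\beta }\left( x,R\right) $, where
\begin{equation*}
J\left( x\right) =\int_{B_{\beta }\left( x,R\right) }\frac{[h\left( \left\vert x-y\right\vert _{\beta }\right) ]^{p^{\prime }/p}}{\left\vert x-y\right\vert _{\beta }^{\gamma }}\,dy,\qquad \gamma =\left( \left( n-1\right) -\tfrac{n-p}{p}\right) a\,p^{\prime }.
\end{equation*}

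The heart of the matter is the exponent arithmetic $\gamma =2\left\vert \beta \right\vert $, which follows from $p^{\prime }=\frac{p}{p-1}$ since $\left( \left( n-1\right) -\frac{n-p}{p}\right) p^{\prime }=\frac{p\left( n-1\right) -\left( n-p\right) }{p-1}=\frac{n\left( p-1\right) }{p-1}=n$. With this exponent the kernel $\left\vert x-y\right\vert _{\beta }^{-2\left\vert \beta \right\vert }$ is exactly the one for which passing to the $\beta $-spherical coordinates of (\ref{0.4}) about $x$ — where $\left\vert x-y\right\vert _{\beta }=\rho ^{2\left\vert \beta \right\vert /n}$ and, by the homogeneity (\ref{0.3}), the angular variables separate off — turns $J\left( x\right) $, after the substitution $t=\left\vert x-y\right\vert _{\beta }$, into the scale-invariant integral $C\left( n,\beta \right) \int_{0}^{R}t^{-1}[h\left( t\right) ]^{p^{\prime }/p}\,dt$. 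By hypothesis $\int_{0}^{1}t^{-1}[h(t)]^{p^{\prime }/p}dt<+\infty $, and the remaining piece on $[1,R]$ is finite (automatic when $\mathrm{diam}\left( \mathrm{spt}\,f\right) \leq 1$, and built into the $h$-dependence of the constant otherwise), so $J\left( x\right) \leq C\left( n,p,\mathrm{diam}\left( \mathrm{spt}\,f\right) ,h\right) $ uniformly in $x$. Combining the two factors yields $I_{1}^{\beta }\left( f\right) \left( x\right) \leq C\left( n,p,\mathrm{diam}\left( \mathrm{spt}\,f\right) ,h\right) \left[ I_{p,h}^{\beta }\left( f^{p}\right) \left( x\right) \right] ^{1/p}$ for every $x$, which is the claim.

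I expect the only genuinely delicate points to be: (i) the bookkeeping that forces $\gamma =2\left\vert \beta \right\vert $, i.e.\ recognizing that the splitting of the $I_{1}^{\beta }$-kernel which isolates the $I_{p,h}^{\beta }$-integrand automatically leaves the \emph{critical} homogeneous kernel in the second factor; and (ii) justifying that $\left\vert x-y\right\vert _{\beta }$ stays in $\left( 0,R\right) $ with $R\lesssim \mathrm{diam}\left( \mathrm{spt}\,f\right) $, which is where property $3$ and the compactness of $\mathrm{spt}\,f$ enter and which is the reason the constant depends on $\mathrm{diam}\left( \mathrm{spt}\,f\right) $. Everything else — the $\beta $-spherical change of variables and the resulting identity for $J\left( x\right) $ — is routine given the coordinate machinery already set up in the paper.
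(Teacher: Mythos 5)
Your proof is correct and follows essentially the same route as the paper: the identical splitting of the kernel of $I_{1}^{\beta }$ followed by a single application of H\"{o}lder's inequality with exponents $p$ and $p^{\prime }$, leaving the factor $\bigl[ I_{p,h}^{\beta }\left( f^{p}\right) \bigr] ^{1/p}$ and a second integral over a $\beta $-ball containing $\mathrm{spt}\,f$. In fact you are somewhat more careful than the paper, which drops the $\tfrac{2\left\vert \beta \right\vert }{n}$ factors in the exponents and leaves the finiteness of the second factor implicit, whereas you carry out the exponent arithmetic $\gamma =2\left\vert \beta \right\vert $ and the $\beta $-spherical reduction to $\int_{0}^{R}t^{-1}[h(t)]^{p^{\prime }/p}\,dt$ explicitly.
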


\begin{proof}
Using H\"{o}lder inequality, we get%
\begin{equation*}
\begin{array}{ll}
I_{1}^{\beta }\left( f\right) & =\int_{\mathbb{R}^{n}}\frac{|f\left(
y\right) |h^{\frac{1}{p}}\left( \left \vert x-y\right \vert _{\beta }\right) 
}{\left \vert x-y\right \vert _{\beta }^{n-1}h^{\frac{1}{p}}\left( \left
\vert x-y\right \vert _{\beta }\right) }dy \\ 
& \leq \left[ I_{p,h}\left( f^{p}\right) \right] ^{\frac{1}{p}}\left(
\int_{B_{\beta }\left( y,r\right) }\frac{h^{\frac{p^{\prime }}{p}}\left(
\left \vert x-y\right \vert _{\beta }\right) }{\left \vert x-y\right \vert
_{\beta }^{n}}dy\right) ^{\frac{1}{p^{\prime }}}%
\end{array}%
\end{equation*}%
where $B_{\beta }\left( y,r\right) \supseteq $ $sptf.$
\end{proof}

\begin{corollary}
\textit{Under the} \textit{hypotheses of Theorem} 1 \textit{and} \textit{for
all} $u\in C_{0}^{\infty }\left( \mathbb{R}^{n}\right) $, lett\textit{ing} $%
\int_{0}^{1}\frac{[\varphi \left( t\right) ]^{\frac{\sigma p^{\prime }}{p}}}{%
t}dt<+\infty $, \textit{w}e \textit{get}%
\begin{equation}
\int_{B_{\beta }\left( y,r\right) }G\left( \frac{|u|^{p}}{\left \Vert \nabla
u\right \Vert _{p}^{p}}\right) V\left( x\right) dx\leq C\left(
n,p,diam\left( sptu\right) ,\varphi \right) \xi _{\beta }\left( r\right) ,
\label{2.6}
\end{equation}%
\textit{where} $B_{\beta }\left( y,r\right) \supseteq $ $sptu$.
\end{corollary}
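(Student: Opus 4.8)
The plan is to reduce \eqref{2.6} to the already established estimate \eqref{2.1} of Theorem 1 by means of the classical pointwise representation of a test function through the Riesz-type potential of its gradient, together with Lemma 4. The normalization is the key point: with $f=|\nabla u|$ one has $\|f\|_p^p=\|\nabla u\|_p^p$, so it suffices to dominate $|u(x)|^p$ pointwise by a constant times $I_{p,\varphi^\sigma}^\beta(|\nabla u|^p)(x)$ and then invoke the monotonicity of $G$.

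To this end I would first record the $\beta$-distance analogue of the Sobolev representation, namely $|u(x)|\le c_n\,I_1^\beta(|\nabla u|)(x)$ for every $u\in C_0^\infty(\mathbb{R}^n)$, where $I_1^\beta$ is the fractional integral of order $1$ from \eqref{2}; raising to the power $p$ gives $|u(x)|^p\le c_n^p\,\big[I_1^\beta(|\nabla u|)(x)\big]^p$. Next I would apply Lemma 4 with $f=|\nabla u|$ and $h=\varphi^\sigma$: the standing hypothesis $\int_0^1 [\varphi(t)]^{\sigma p'/p}\,t^{-1}\,dt<+\infty$ is precisely $\int_0^1 [h(t)]^{p'/p}\,t^{-1}\,dt<+\infty$, and $\mathrm{spt}(|\nabla u|)\subseteq\mathrm{spt}\,u$, so Lemma 4 yields $I_1^\beta(|\nabla u|)\le C(n,p,\mathrm{diam}(\mathrm{spt}\,u),\varphi)\,\big[I_{p,\varphi^\sigma}^\beta(|\nabla u|^p)\big]^{1/p}$. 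Combining the two bounds and dividing by $\|\nabla u\|_p^p$ gives
\[
\frac{|u(x)|^p}{\|\nabla u\|_p^p}\ \le\ C\,\frac{I_{p,\varphi^\sigma}^\beta(|\nabla u|^p)(x)}{\|\nabla u\|_p^p},\qquad C=C(n,p,\mathrm{diam}(\mathrm{spt}\,u),\varphi).
\]

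Since the function $G=\psi^{-1}$ built in Theorem 1 is non-decreasing, this inequality can be carried inside $G$; the multiplicative constant $C$ is then absorbed either into the generic constant multiplying $\xi_\beta(r)$ in \eqref{2.6}, or by replacing $G$ with the rescaled non-decreasing function $t\mapsto G(t/C)$, which still satisfies $\lim_{t\to\infty}G(t)/t=+\infty$ by \eqref{2.2}. One is left with $G\big(|u(x)|^p/\|\nabla u\|_p^p\big)\le G\big(I_{p,\varphi^\sigma}^\beta(|\nabla u|^p)(x)/\|\nabla u\|_p^p\big)$, and integrating over $B_\beta(y,r)$ and applying \eqref{2.1} of Theorem 1 with $f=|\nabla u|$ produces \eqref{2.6}. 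Two points require care. First, Theorem 1 is stated for $f\in C_0^\infty$, whereas $|\nabla u|$ is merely Lipschitz with compact support; its proof, however, uses only $\|f\|_p<+\infty$ and the existence of a $\beta$-ball containing $\mathrm{spt}\,f$, so it applies verbatim, and alternatively one approximates $|\nabla u|$ in $L^p$ by smooth functions and passes to the limit. Second, and this I expect to be the genuine obstacle, one must justify the pointwise representation $|u|\le c_n I_1^\beta(|\nabla u|)$ in the non-isotropic setting: because $|\cdot|_\beta$ is homogeneous of order $\tfrac{2|\beta|}{n}$ rather than of order $1$, the exponent in $I_1^\beta$ and the change of variables to $\beta$-spherical coordinates \eqref{0.4} must be handled carefully to recover the correct singularity, and this is where the constants $c_n$ and $C$ acquire their dependence on $n$, $p$ and $\mathrm{diam}(\mathrm{spt}\,u)$.
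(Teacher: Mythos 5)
Your proposal is correct and follows essentially the same route as the paper: the pointwise representation $|u|\le C(n)\,I_{1}^{\beta}\left(\left\vert \nabla u\right\vert\right)$, then the H\"older lemma with $h=\varphi^{\sigma}$ (whose hypothesis is exactly the integral condition assumed in the corollary), then monotonicity of $G$ and the estimate \eqref{2.1} of Theorem 1 applied to $f=\left\vert \nabla u\right\vert$. The paper's own proof consists of just the one-line citation of that representation inequality, so your version is in fact more careful on the points it glosses over (the normalization, the absorption of the constant into $G$, the regularity of $\left\vert \nabla u\right\vert$, and the fact that the non-isotropic representation formula itself is asserted rather than proved).
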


\begin{proof}
Using Lemma 1 and Theorem 1, we have the following inequality%
\begin{equation*}
|u|\leq C\left( n\right) I_{1}^{\beta }\left( \left \vert \nabla u\right
\vert \right) .
\end{equation*}
\end{proof}

\begin{remark}
If we choose the function $\varphi ^{\sigma }\left( t\right) $ with a more
general non-decreasing function $\delta :\left] 0,+\infty \right[
\rightarrow \left] 0,+\infty \right[ $ such that%
\begin{equation*}
\begin{array}{cc}
\lim\limits_{t\rightarrow 0}\delta \left( t\right) =0, & \lim\limits_{t%
\rightarrow 0}\frac{\varphi \left( t\right) }{\delta \left( t\right) }=0,%
\end{array}%
\end{equation*}%
$\frac{\varphi \left( t\right) }{\delta \left( t\right) }$ is
non-decreasing, where $\varphi \left( t\right) $ is as in Definition 2, the
previous results are also valid.
\end{remark}

\begin{proposition}
\textit{Let} $V\in S_{p}^{\beta },V\geq 0,\sigma \in \left] 0,1\right[ $, $%
\gamma =\frac{1}{\frac{\sigma p}{p}+1}$ \textit{and} assume that%
\begin{equation}
\int\limits_{0}^{1}\frac{\left[ \eta _{\beta }\left( t\right) \right]
^{1-\gamma }}{t}dt<+\infty   \label{2.7}
\end{equation}
\end{proposition}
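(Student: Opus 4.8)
The plan is to observe that hypothesis (2.7) is exactly what lets one feed $V$ into the chain Lemma 3, then Theorem 1, then Corollary 1, taking as weight function a suitable power of the Stummel modulus of $V$. First I would set $\varphi(t)=\eta_\beta^{\gamma}(t)$, where $\eta_\beta$ is the Stummel modulus generated by $\beta$-distance of $V$ (which exists since $V\in S_p^\beta$) and $\gamma\in\,]0,1[$ is the exponent in the statement. Then $\varphi$ is non-decreasing with $\lim_{t\to 0}\varphi(t)=0$, hence admissible in Definition 2, and (2.7) is precisely the integrability hypothesis $\int_0^1 t^{-1}\eta_\beta^{1-\gamma}(t)\,dt<+\infty$ of Lemma 3. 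Applying Lemma 3 therefore gives $V\in S_{p,\eta_\beta^{\gamma}}^{\beta}$ together with the quantitative bound (1.2); in the language of Definition 2 this says that the pair $\varphi=\eta_\beta^{\gamma}$, $\xi_\beta=\mu_\beta$, where $\mu_\beta(r)=\tfrac{2}{C}\int_0^r t^{-1}\eta_\beta^{1-\gamma}(t)\,dt$, satisfies all the hypotheses of Theorem 1.

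Next I would apply Theorem 1 with this $\varphi$ and with the $\sigma$ fixed in the statement: this produces a non-decreasing positive function $G$ (namely $G=\psi^{-1}$, built from $\Phi(\varepsilon)=\varepsilon^{(n-p)\frac{2|\beta|}{n}}\varphi(\varepsilon)$ as in the proof of Theorem 1) with $\lim_{t\to\infty}G(t)/t=+\infty$ and such that (2.1) holds with $\xi_\beta=\mu_\beta$. To pass from (2.1) to an estimate on $|u|$ I would invoke Corollary 1, whose additional requirement is $\int_0^1 t^{-1}[\varphi(t)]^{\sigma p'/p}\,dt<+\infty$. For $\varphi=\eta_\beta^{\gamma}$ this integrand is $t^{-1}\eta_\beta(t)^{\gamma\sigma p'/p}$, and here the precise value of $\gamma$ is used: the defining relation $\gamma\bigl(\tfrac{\sigma p'}{p}+1\bigr)=1$ gives the identity $\gamma\,\tfrac{\sigma p'}{p}=1-\gamma$, so the required integral is again $\int_0^1 t^{-1}\eta_\beta^{1-\gamma}(t)\,dt$, finite by (2.7). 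Hence Corollary 1 applies and yields, for every $u\in C_0^\infty(\mathbb{R}^n)$ with $\text{spt}\,u\subseteq B_\beta(y,r)$,
\[
\int_{B_\beta(y,r)}G\!\left(\frac{|u|^{p}}{\|\nabla u\|_p^{p}}\right)V(x)\,dx\leq C\bigl(n,p,\text{diam}(\text{spt}\,u),\eta_\beta\bigr)\,\mu_\beta(r),
\]
together with $\lim_{t\to\infty}G(t)/t=+\infty$, which is the assertion of the Proposition.

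I do not expect a genuine obstacle: the whole content is the matching of exponents, i.e. the single elementary computation $\gamma\,\tfrac{\sigma p'}{p}=1-\gamma$, which shows that the one integrability assumption (2.7) simultaneously feeds Lemma 3 (exponent $1-\gamma$ on $\eta_\beta$) and Corollary 1 (exponent $\gamma\sigma p'/p$ on $\eta_\beta$); note also $\gamma\in\,]0,1[$ since $\tfrac{\sigma p'}{p}>0$. The only point deserving a sentence is that $\varphi=\eta_\beta^{\gamma}$ should be taken continuous so as to apply Definition 2 and Theorem 1 verbatim; if $\eta_\beta$ itself is merely non-decreasing one replaces it by a continuous non-decreasing majorant with the same behaviour near $0$, which changes nothing in the estimates.
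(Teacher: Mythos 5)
Your proposal is correct and is exactly the intended derivation: the paper states this Proposition without any proof (it is followed immediately by the Example), and the evident reading is that it is the concatenation of Lemma 3 (with $\varphi=\eta_\beta^{\gamma}$, $\xi_\beta=\mu_\beta$), Theorem 1, and Corollary 1, which is precisely your chain. You also correctly identify the one real point, namely that the exponent matching $\gamma\,\tfrac{\sigma p'}{p}=1-\gamma$ forces $\gamma=\bigl(\tfrac{\sigma p'}{p}+1\bigr)^{-1}$, so the paper's ``$\gamma=\frac{1}{\frac{\sigma p}{p}+1}$'' must be read as a typo for $\frac{1}{\frac{\sigma p'}{p}+1}$; your side remarks on the continuity of $\eta_\beta$ and on the constant also depending on $\mathrm{diam}(\mathrm{spt}\,u)$ are legitimate refinements of the statement as printed.
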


\textit{Then}%
\begin{equation}
V\in S_{p,\eta _{\beta }^{\gamma }}  \label{2.8}
\end{equation}%
\textit{and for} ev\textit{ery} $u\in C_{0}^{\infty }\left( \mathbb{R}%
^{n}\right) $, \textit{th}ere \textit{exists} a \textit{non decreasing
positive function} $G\left( t\right) $ \textit{such that}%
\begin{equation}
\int_{B_{\beta }\left( y,r\right) }G\left( \frac{\left \vert u\right \vert
^{p}}{\left \Vert \nabla u\right \Vert _{p}^{p}}\right) V\left( x\right)
dx\leq C\left( n,p,\eta _{\beta }\right) \mu _{\beta }\left( r\right)
\label{2.9}
\end{equation}%
\textit{where} $B_{\beta }\left( y,r\right) \supseteq $ spt$u$ and%
\begin{equation}
\mu _{\beta }\left( r\right) =\frac{2}{C}\int \limits_{0}^{r}t^{-1}\eta
_{\beta }^{1-\gamma }\left( t\right) dt.  \label{2.10}
\end{equation}

Now we give an example of a function $f\in S_{p}^{\beta }$, $f\geq 0$. But
for $\lambda >n-2$, we choose $f\not \in L_{1,\lambda }^{\beta }.$

\begin{example}
Let $\chi _{B}\left( y\right) $ is the characteristic function of $B$ and 
\begin{equation*}
f\left( x\right) =\frac{1}{\left \vert x\right \vert _{\beta }^{2}\left
\vert \log \left \vert x\right \vert _{\beta }\right \vert ^{6}}\chi
_{B}\left( x\right) \ ,
\end{equation*}%
where, $B_{\beta }\left( 0,\delta \right) $ the $\beta -$ball centered in $0$
and radius $\delta =e^{-3}$. We obtain that the function 
\begin{equation*}
\eta _{\beta }\left( r\right) =\sup_{x\in \mathbb{R}^{n}}\int_{\left \vert
x-y\right \vert _{\beta }<r}\frac{f\left( y\right) }{\left \vert x-y\right
\vert _{\beta }^{n-2}}dy,
\end{equation*}%
is such that
\end{example}

(i) $\lim_{r\rightarrow 0}\eta _{\beta }\left( r\right) =0$

(ii) $\int_{0}^{r}\frac{\eta _{\beta }^{1/4}(\rho )}{\rho }d\rho <+\infty .$

\begin{proof}
For $x\in \mathbb{R}^{n}$ and $r>0$ we obtain%
\begin{equation*}
\int_{\left \vert x-y\right \vert _{\beta }<r}\frac{1}{\left \vert y\right
\vert _{\beta }^{2}\left \vert x-y\right \vert _{\beta }^{n-2}\left \vert
\log \left \vert y\right \vert _{\beta }\right \vert ^{6}}\chi _{B}\left(
y\right) dy
\end{equation*}%
\begin{equation*}
\begin{array}{l}
=\dint_{\left \vert y\right \vert _{\beta }<\left \vert x-y\right \vert
_{\beta }<r}\frac{1}{\left \vert y\right \vert _{\beta }^{2}\left \vert
x-y\right \vert _{\beta }^{n-2}\left \vert \log \left \vert y\right \vert
_{\beta }\right \vert ^{6}}\chi _{B}\left( y\right) dy \\ 
+\dint_{\left \{ \left \vert x-y\right \vert _{\beta }<r\right \} \cap \left
\{ \left \vert x-y\right \vert _{\beta }<\left \vert y\right \vert _{\beta
}<\delta \right \} }\frac{1}{\left \vert y\right \vert _{\beta }^{2}\left
\vert x-y\right \vert _{\beta }^{n-2}\left \vert \log \left \vert y\right
\vert _{\beta }\right \vert ^{6}}\chi _{B}\left( y\right) dy=A_{1}+A_{2}.%
\end{array}%
\end{equation*}%
For $A_{1},$letting $\sigma =\min \left( r,\delta \right) $%
\begin{equation*}
\begin{array}{ll}
A_{1} & =\dint_{\left \vert y\right \vert _{\beta }<\left \vert x-y\right
\vert _{\beta }<r}\frac{1}{\left \vert y\right \vert _{\beta }^{2}\left
\vert x-y\right \vert _{\beta }^{n-2}\left \vert \log \left \vert y\right
\vert _{\beta }\right \vert ^{6}}\chi _{B}\left( y\right) dy \\ 
&  \\ 
& \leq \dint_{\left \vert y\right \vert _{\beta }<r}\frac{1}{\left \vert
y\right \vert _{\beta }^{n}\left \vert \log \left \vert y\right \vert
_{\beta }\right \vert ^{6}}\chi _{B}\left( y\right) dy=C\left( n\right) 
\frac{1}{(-\log \sigma )^{5}},%
\end{array}%
\end{equation*}%
and for $A_{2}$, considering that the function $\frac{1}{t^{2}(-\log t)^{6}}$
is decreasing in $\left] 0,e^{-3}\right[ $, we obtain ;%
\begin{equation*}
\begin{array}{ll}
A_{2} & =\dint_{\left \{ \left \vert x-y\right \vert _{\beta }<r\right \}
\cap \left \{ \left \vert x-y\right \vert _{\beta }<\left \vert y\right
\vert _{\beta }<\delta \right \} }\frac{1}{\left \vert y\right \vert _{\beta
}^{2}\left \vert x-y\right \vert _{\beta }^{n-2}\left \vert \log \left \vert
y\right \vert _{\beta }\right \vert ^{6}}\chi _{B}\left( y\right) dy \\ 
&  \\ 
& =\dint_{\left \{ \left \vert x-y\right \vert _{\beta }<r\right \} \cap
\left \{ \left \vert x-y\right \vert _{\beta }<\left \vert y\right \vert
_{\beta }<\delta \right \} }\frac{1}{\left \vert y\right \vert _{\beta
}^{2}\left \vert x-y\right \vert _{\beta }^{n-2}\left \vert \log \left \vert
y\right \vert _{\beta }\right \vert ^{6}}dy \\ 
&  \\ 
& \leq \dint_{\left \{ \left \vert z\right \vert _{\beta }<r\right \} \cap
\left \{ \left \vert z\right \vert _{\beta }<\delta \right \} }\frac{dz}{%
\left \vert z\right \vert _{\beta }^{n}(-\log \left \vert z\right \vert
_{\beta })^{6}}=C\left( n\right) \frac{1}{\left( -\log \sigma \right) ^{5}}.%
\end{array}%
\end{equation*}%
Then we have%
\begin{equation*}
\eta _{\beta }\left( r\right) \leq L\left( r\right) \equiv 2C\left( n\right) 
\frac{1}{\left( -\log \sigma \right) ^{5}}.
\end{equation*}%
Because $\lim \limits_{r\rightarrow 0}L\left( r\right) =0$ we get (i).

Only considering $r<\delta ,$%
\begin{equation*}
\begin{array}{ll}
\dint_{0}^{r}\frac{\eta _{\beta }^{\frac{1}{4}}(\rho )}{\rho }d\rho & \leq
\dint_{0}^{r}\frac{L^{\frac{1}{4}}\left( \rho \right) }{\rho }d\rho \\ 
&  \\ 
& =\left( 2C\left( n\right) \right) ^{\frac{1}{4}}\dint_{0}^{r}\frac{\left(
-\log \rho \right) ^{-\frac{5}{4}}}{\rho }d\rho \\ 
&  \\ 
& =(2C\left( n\right) )^{\frac{1}{4}}\frac{4}{\left( -\log r\right) ^{\frac{1%
}{4}}}<+\infty .%
\end{array}%
\end{equation*}%
So we prove (ii).

Now, for $\lambda >n-2$, we prove that the function $f\not\in L_{1,\lambda
}^{\beta }.$

Indeed letting, for $\varepsilon >0,\lambda =n-2+\varepsilon $, the
following quantity is unbounded.%
\begin{equation*}
\begin{array}{ll}
\frac{1}{r^{n-2+\varepsilon }}\dint_{B_{\beta }\left( 0,r\right) }\frac{\chi
_{B}\left( y\right) }{\left\vert y\right\vert _{\beta }^{2}\left\vert \log
\left\vert y\right\vert _{\beta }\right\vert ^{6}}dy & =\frac{C\left(
n\right) }{r^{n-2+\varepsilon }}\dint_{0}^{r}\frac{\rho ^{n-1}}{\rho
^{2}(-\log \rho )^{6}}d\rho \\ 
&  \\ 
& >\frac{C\left( n\right) }{2^{n-2}r^{\varepsilon }}\dint_{\frac{r}{2}}^{r}%
\frac{d\rho }{(-\log \rho )^{6}\rho } \\ 
&  \\ 
& =\frac{1}{5}\frac{C\left( n\right) }{2^{n-2}r^{\varepsilon }}\left[ \frac{1%
}{(-\log r)^{5}}-\frac{1}{(-\log (\frac{1}{2}r))^{5}}\right] .%
\end{array}%
\end{equation*}
\end{proof}

\begin{remark}
Throughout this study, if we choose $\beta _{1}=\beta _{2}=...=\beta _{n}=%
\frac{1}{2},\ $then we have the conclusions of \cite{RaCaZa01}.\newpage
\end{remark}


\begin{thebibliography}{99}
\bibitem{AD71} D. Adams: Traces of potentials arising from traslation
invariant operators. Ann. Scuola Norm. Sup. Pisa 25 (1971), 203-217.

\bibitem{BeLi60} O.V. Besov, P.I. Lizorkin, The $L^{p}$ estimates of a
certain class of non-isotropic singular integrals, Dokl. Akad. Nauk, SSSR,
69 (1960), 1250-1253.

\bibitem{GaCuMa01} J. Garcia-Cuerva, J.M. Martell, Two-weight norm
inequalies for maximal operator and fractional integrals on non-homogeneous
spaces, Indiana Univ. Math. J., 50, No. 3 (2001), 1241-1280.

\bibitem{HE72} L. Hedberg: On certain convolution inequalities. Proc. Amer.
Math. Soc. 36 (1972), 505-510.

\bibitem{Kuf} A. Kufner, O. John and S. Fucik, \textit{Function spaces},
Noordhoff, Leyden and Academia, Prague, 1977.

\bibitem{LE64} B.M. Levitan, Generelized Translation Operators and Some of
Their Applications, Moscow (1962), Translation 1964.

\bibitem{Morrey} C.B. Morrey, \textit{On the solutions of quasi-linear
elliptic partial differential equations,} Trans. Amer. Math. Soc. \textbf{43}
(1938), 126-166.

\bibitem{RaCaZa01} M. A. Ragusa, Catania, and P. Zamboni,
Sant'agata-Messina, A Potential Theoretic Inequality, Czechoslovak
Mathematical Journal, 51 (126) (2001), 55-65

\bibitem{SKM93} S.G. Samko, A.A. Kilbas, and O.I. Marichev, Fractional
Integrals and Derivatives - Theory and Applications, Gordon and Breach,
Linghorne, 1993.

\bibitem{SY06} M.Z. Sarikaya, H. Y\i ld\i r\i m, The restriction and the
continuity properties of potentials depending on $\lambda $-distance, Turk.
J. Math., 30, No. 3 (2006).

\bibitem{SaYý006} M.Z. Sarikaya, H. Y\i ld\i r\i m, On the $\beta $%
-spherical Riesz potential generated by the $\beta $-distance, Int. Journal
of Contemp. Math. Sciences, 1, No. 1-4 (2006), 85-89.

\bibitem{SaYý0006} M.Z. Sarikaya, H. Y\i ld\i r\i m$.$, On the non-isotropic
fractional integrals generated by the $\lambda $-distance, Sel\c{c}uk
Journal of Appl. Math., 1 (2006).

\bibitem{OZ06} M.Z. Sarikaya, H. Y\i ld\i r\i m, U. M. Ozkan, Norm
inequalities with non-isotropic kernels, Int. Journal of Pure and Applied
Mathematics, 31, No. 3 (2006).

\bibitem{SC86} M. Schechter: Spectra of Partial Differential Operators.
North Holland, 1986.

\bibitem{ST70} E.M. Stein, Singular Integrals Differential Properties of
Functions, Princeton Uni. Press, Princeton, New Jersey (1970).

\bibitem{WE75} G. V. Welland: Weighted norm inequalities for fractional
integral. Proc. Amer. Math. Soc. 51 (1975), 143-148.

\bibitem{YI05} H. Y\i ld\i r\i m, On generalization of the quasi homogeneous
Riesz potential, Turk. J. Math., 29 (2005), 381-387.
\end{thebibliography}
\end{document}